\documentclass[11pt]{article}
\usepackage[margin=1in]{geometry}
\usepackage{amsmath,amssymb,amsthm,booktabs,tabularx}
\usepackage[T1]{fontenc}
\usepackage{lmodern}
\usepackage[hidelinks]{hyperref}
\usepackage{microtype}

\newtheorem{theorem}{Theorem}
\newtheorem{lemma}{Lemma}
\newcommand{\zar}[3]{z(#1,#2;#3)}
\newcommand{\kfree}{$K_{3,3}$-free}

\hypersetup{
  pdftitle={Seven Exact Finite Zarankiewicz Numbers from a Single 13 x 18 Core},
  pdfauthor={Shengteng Hou},
  pdfsubject={Finite Zarankiewicz numbers and verifiable constructions},
  pdfkeywords={Zarankiewicz number, extremal graph theory, K3,3, exact computation}
}

\title{Seven Exact Finite Zarankiewicz Numbers\\from a Single $13\times18$ Core}
\author{Shengteng Hou}
\date{Version 1.0 --- 1 August 2026}

\begin{document}
\maketitle

\begin{abstract}
We present a unified proof and certificate package establishing seven exact finite Zarankiewicz values for the forbidden graph $K_{3,3}$:
\[
\zar{12}{18}{3}=108,\quad
\zar{13}{17}{3}=110,\quad
\zar{13}{18}{3}=116,
\]
\[
\zar{14}{17}{3}=118,\quad
\zar{14}{18}{3}=124,\quad
\zar{15}{17}{3}=126,\quad
\zar{15}{18}{3}=132.
\]
The witnesses form a connected family generated by an explicit $13\times18$ matrix with 116 ones. Deleting one row or one column and adding either of exactly two admissible weight-eight rows for this fixed labeled core produces the remaining witnesses. The exceptional upper-bound closure, $\zar{12}{18}{3}\le108$, combines the published uniqueness of the extremal $12\times17$ graph with 103 edges and an exhaustive rejection of all $\binom{12}{6}=924$ possible degree-six column extensions. The supplement contains all witnesses, a portable verifier, a machine-readable report, and integrity hashes. Prior numerical ingredients and the role of the present work are separated cell by cell.
\end{abstract}

\section{Introduction}
For positive integers $m,n,s,t$, the Zarankiewicz number $z(m,n;s,t)$ is the maximum number of edges in a bipartite graph with parts of sizes $m$ and $n$ that contains no copy of $K_{s,t}$. We abbreviate $z(m,n;3,3)$ as $\zar{m}{n}{3}$. Equivalently, $\zar{m}{n}{3}$ is the maximum number of ones in an $m\times n$ binary matrix containing no all-one $3\times3$ submatrix formed by arbitrary choices of three rows and three columns.

The finite $K_{3,3}$ case has been developed using explicit constructions, combinatorial bounds, integer programming, and SAT methods; see Roman~\cite{Roman1975}, Collins et al.~\cite{Collins2016}, Tan~\cite{Tan2022}, and Davies--Gill--Horsley~\cite{Davies2026}. Bhan, Nobili, and Langer~\cite{Bhan2026} recently supplied lower bounds in a broad parameter rectangle, including four cells considered here.

The priority position requires care. A 2021 arXiv preprint by Rowshan and Gholami~\cite{Rowshan2021} displayed $z(K_{13,17},3)=110$ as an equality in a preliminary proposition, but justified it only by citing tables that supplied bounds. The peer-reviewed 2022 version~\cite{Rowshan2022} changed the relevant statements to inequalities, including $z(K_{13,17},3)\le110$. Accordingly, we do not claim priority for the first appearance of the equality string. Our contribution at $(13,17)$ is an explicit extremal witness and an independently checkable closure of the published upper bound.

Relative to the public sources located in our audit, the principal contributions are:
\begin{enumerate}
  \item a finite upper-bound closure $\zar{12}{18}{3}\le108$;
  \item explicit witnesses of sizes 116 and 126 at $(13,18)$ and $(15,17)$, improving the May 2026 public lower bounds by one edge in each cell;
  \item an explicit 110-edge witness at $(13,17)$, supplying a verifiable closure of a value previously printed without such a construction in the 2021 preprint and retained only as an upper bound in its journal version;
  \item a unified extension--deletion structure generated by one $13\times18$ core, together with deterministic certificates.
\end{enumerate}

\begin{table}[ht]
\centering
\scriptsize
\caption{Prior public numerical status and the role of the present package.}
\begin{tabularx}{\textwidth}{@{}c c c X@{}}
\toprule
Cell & Prior lower bound & Prior upper bound & Role here \\
\midrule
$(12,18)$ & 108~\cite{Bhan2026} & 109~\cite{Collins2016} & new upper closure to 108 \\
$(13,17)$ & 106~\cite{Bhan2026}; equality string in~\cite{Rowshan2021} & 110~\cite{Collins2016,Rowshan2022} & explicit 110 witness and closure \\
$(13,18)$ & 115~\cite{Bhan2026} & 116~\cite{Collins2016} & new 116 witness \\
$(14,17)$ & 118~\cite{Bhan2026} & 118~\cite{Collins2016} & unified derivation and certificate \\
$(14,18)$ & 124~\cite{Bhan2026} & 124~\cite{Collins2016} & unified derivation and certificate \\
$(15,17)$ & 125~\cite{Bhan2026} & 126~\cite{Collins2016} & new 126 witness \\
$(15,18)$ & 132~\cite{Bhan2026} & 132~\cite{Collins2016} & unified derivation and certificate \\
\bottomrule
\end{tabularx}
\end{table}

\section{Main theorem}
\begin{theorem}\label{thm:main}
The following exact values hold:
\begin{align*}
\zar{12}{18}{3}&=108, & \zar{13}{17}{3}&=110, & \zar{13}{18}{3}&=116,\\
\zar{14}{17}{3}&=118, & \zar{14}{18}{3}&=124, & \zar{15}{17}{3}&=126,\\
&&\zar{15}{18}{3}&=132.
\end{align*}
\end{theorem}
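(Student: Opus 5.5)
The proof splits into lower bounds, which come from explicit witnesses, and upper bounds. Except for $(12,18)$, every upper bound is already published~\cite{Collins2016,Rowshan2022}: $110$ at $(13,17)$, $116$ at $(13,18)$, $118$ at $(14,17)$, $124$ at $(14,18)$, $126$ at $(15,17)$, and $132$ at $(15,18)$. So the work is to exhibit one extremal witness per cell and to close $\zar{12}{18}{3}\le108$ separately.

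\textbf{Lower bounds.} I would start from the explicit $13\times18$ core $C$ with 116 ones and certify that it is \kfree. The check is mechanical: for each of the $\binom{13}{3}=286$ row triples, count the columns containing ones in all three rows, and confirm the count never exceeds 2. This gives $\zar{13}{18}{3}\ge116$. The other witnesses then come from $C$ as follows.
\begin{itemize}
\item Deleting a column of weight $6$ gives a $13\times17$ matrix with $110$ ones.
\item Deleting a row of weight $8$ gives a $12\times18$ matrix with $108$ ones.
\item Adding one of the two admissible weight-eight rows $r_1,r_2$ gives $14\times18$ with $124$ ones.
\item Adding both rows gives $15\times18$ with $132$ ones, provided $r_1$ and $r_2$ are compatible.
\item Deleting a weight-$6$ column from the $14\times18$ and $15\times18$ matrices gives $118$ and $126$ ones.
\end{itemize}
Deletion preserves \kfree ness. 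For each added row, \kfree ness amounts to checking that, for every pair of existing rows, the support of the new row meets their common column set in at most 2 positions. For the two-row extension, one further checks that $|r_1\cap r_2\cap \mathrm{supp}(\text{row } i)|\le2$ for every old row $i$. Each of these is a finite check, and I would record it in the certificate.

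\textbf{The upper bound $\zar{12}{18}{3}\le108$.} Suppose $M$ is a $12\times18$ \kfree\ matrix with 109 ones. Removing a column of weight $w$ leaves a $12\times17$ \kfree\ matrix with $109-w$ ones. By the known value $\zar{12}{17}{3}=103$, this forces $w\ge6$ for every column. Since the column weights average $109/18\approx6.06$, the weights must be seventeen columns of weight 6 and one of weight 7.

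Now delete a weight-6 column. The result is a $12\times17$ \kfree\ matrix with exactly 103 ones, which by the published uniqueness result is isomorphic to the unique extremal graph $G$. Hence $M$ is $G$ plus one column whose support is a 6-subset $S$ of the 12 rows. For $M$ to be \kfree, no pair of rows $\{a,b\}\subseteq S$ may share 2 or more columns in $G$ beyond those already present. More precisely, the new column creates a $K_{3,3}$ exactly when some three rows of $S$ have at least 2 common columns in $G$.

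The main obstacle is this exhaustive step: showing that each of the $\binom{12}{6}=924$ candidate sets $S$ contains a row triple with at least two common columns in $G$. Symmetry reduces the work only slightly, so I would simply enumerate all 924 subsets and record, for each, an explicit offending triple and its two columns as a rejection certificate. One subtlety deserves care: the uniqueness theorem must be applied up to isomorphism. The 924 sets are taken relative to a fixed labeling of $G$, which is harmless because every extension of an isomorphic copy pulls back to an extension of $G$ itself.

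Once all 924 extensions are rejected, 109 ones is impossible, so $\zar{12}{18}{3}\le108$. Combining this with the lower bounds above completes all seven equalities.
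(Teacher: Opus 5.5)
Your proposal is correct and matches the paper's proof step for step. It uses the same witness family from the $13\times18$ core (delete a weight-eight row or a weight-six column, append the two admissible weight-eight rows), the published upper bounds for the six non-exceptional cells, and the same $(12,18)$ closure: $\zar{12}{17}{3}=103$ forces seventeen columns of degree six and one of degree seven, uniqueness of the $(12,17,103)$ graph applies, and all $\binom{12}{6}=924$ six-subsets are rejected on a fixed representative. One small slip: your sentence saying no pair of rows in $S$ may share two or more columns is not the right criterion, since row pairs in $G$ typically share several columns; but the triple criterion you state immediately afterward (some three rows of $S$ with at least two common columns in $G$) is exactly the correct rejection test, so the argument stands.
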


\begin{table}[ht]
\centering
\small
\caption{Closure route for each exact value.}
\begin{tabular}{@{}ccc@{}}
\toprule
Cell & Value & Upper-bound route \\
\midrule
$(12,18)$ & 108 & uniqueness plus 924 rejected extensions \\
$(13,17)$ & 110 & published upper bound \\
$(13,18)$ & 116 & deletion contradiction from $(13,17)$ \\
$(14,17)$ & 118 & published upper bound \\
$(14,18)$ & 124 & published upper bound \\
$(15,17)$ & 126 & published upper bound \\
$(15,18)$ & 132 & published upper bound \\
\bottomrule
\end{tabular}
\end{table}

\section{The $13\times18$ core}
Let $M$ be the binary $13\times18$ matrix with 116 ones supplied in the supplement. Direct enumeration shows that every triple of rows has at most two common one-columns. Hence $M$ is \kfree{} and $\zar{13}{18}{3}\ge116$.

Collins et al.~\cite{Collins2016} give $\zar{13}{17}{3}\le110$. If a \kfree{} $13\times18$ matrix had 117 ones, some column would have degree at most $\lfloor117/18\rfloor=6$. Deleting that column would leave a \kfree{} $13\times17$ matrix with at least 111 ones, contradicting the upper bound. Thus $\zar{13}{18}{3}=116$.

The core contains a degree-six column. Deleting it gives the supplied $13\times17$ witness with 110 ones. Together with the published upper bound, this proves $\zar{13}{17}{3}=110$.

\section{Two admissible row extensions}
For the fixed labeled core $M$, we enumerate all $\binom{18}{8}=43{,}758$ binary rows of weight eight. Exactly two can be appended without creating a $K_{3,3}$. Appending either one produces a $14\times18$ witness with 124 ones. After either first extension, exhaustive enumeration shows that the other row is the unique remaining weight-eight extension. Appending both gives a $15\times18$ witness with 132 ones.

This statement is deliberately local to the supplied labeled matrix $M$. We do not claim that $M$ is the unique $13\times18$ extremal graph up to isomorphism, nor do we classify all extremal cores or all their possible extensions.

The upper bounds in~\cite{Collins2016} therefore give
\[
\zar{14}{18}{3}=124,
\qquad
\zar{15}{18}{3}=132.
\]
The supplied $14\times18$ and $15\times18$ matrices contain degree-six columns whose deletion gives witnesses with 118 and 126 ones. The corresponding published upper bounds yield
\[
\zar{14}{17}{3}=118,
\qquad
\zar{15}{17}{3}=126.
\]

\section{The $12\times18$ upper-bound closure}
Table 4 of Collins et al.~\cite{Collins2016} records $\zar{12}{17}{3}=103$ and marks the extremal $(12,17,103)$ graph as unique up to isomorphism.

\begin{lemma}\label{lem:1218}
No \kfree{} $12\times18$ binary matrix has 109 ones.
\end{lemma}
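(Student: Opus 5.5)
Suppose, for contradiction, that $A$ is a \kfree{} $12\times18$ matrix with 109 ones. The plan is to delete a lightest column and land exactly on the unique $(12,17,103)$ extremal configuration, then rule out every way of adding the column back.

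\textbf{Step 1: a lightest column has degree exactly six.} Since $\lfloor 109/18\rfloor = 6$, some column has degree at most six. If a column $c$ had degree at most five, deleting it would leave a \kfree{} $12\times17$ matrix with at least $104$ ones. This contradicts $\zar{12}{17}{3}=103$ from Table~4 of Collins et al.~\cite{Collins2016}. Hence every column has degree at least six, and some column $c$ has degree exactly six.

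\textbf{Step 2: deleting $c$ gives the unique extremal graph.} Deleting $c$ leaves a \kfree{} $12\times17$ matrix $B$ with $109-6=103$ ones. This is extremal, so by the cited uniqueness $B$ is isomorphic to the fixed $(12,17,103)$ graph $G$ of~\cite{Collins2016}. After permuting rows and columns, we may assume $B$ equals a fixed labeled representative of $G$. Then $A$ is obtained from this representative by appending one column whose support is a $6$-subset $S$ of the $12$ rows.

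\textbf{Step 3: reject all $\binom{12}{6}=924$ extensions.} We enumerate every $6$-subset $S$ and test whether the new column creates a $K_{3,3}$. Such a copy must use the new column, so the test is: does some triple of rows $T\subseteq S$ have at least two common one-columns in $B$? Equivalently, $S$ is admissible exactly when it contains no row triple whose common neighbourhood in $B$ has size at least two. We precompute, for each of the $\binom{12}{3}=220$ row triples, its common-neighbourhood size in $B$, and mark the triples with size at least two as forbidden. Then we check each of the 924 sets $S$ against these forbidden triples. The claim to verify is that every $S$ contains a forbidden triple. Any surviving $S$ would give a \kfree{} $12\times18$ matrix with 109 ones, so rejecting all 924 sets completes the proof. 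We do not need to quotient by automorphisms of $G$: enumerating all labeled subsets is already exhaustive for the fixed representative.

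\textbf{Main obstacle.} The combinatorial skeleton is routine. The weight rests on two points. The first is the correctness of the cited uniqueness of the $(12,17,103)$ graph, which we take as a published input. The second is reproducing that graph exactly as a labeled matrix, with the correct edge count and \kfree{} property verified, so that the 924-case check applies to the right object. For this reason the verifier should independently confirm that the representative has 103 ones and no $K_{3,3}$ before running the extension test, and should record the forbidden triple found for each rejected $S$ as a checkable certificate.
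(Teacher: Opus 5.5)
Your proposal is correct and follows essentially the same route as the paper: it rules out columns of degree at most five via $\zar{12}{17}{3}=103$, deletes a degree-six column to land in the unique $(12,17,103)$ class, transports to a fixed representative, and exhaustively rejects all $\binom{12}{6}=924$ six-subsets. Your forbidden-triple formulation is a concrete implementation of the paper's verifier step. You also rightly skip the paper's observation that the degree multiset is seventeen sixes and one seven, which the argument does not need.
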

\begin{proof}
Assume that such a matrix exists. If a column had degree at most five, deleting it would leave at least 104 ones in a \kfree{} $12\times17$ matrix, contradicting $\zar{12}{17}{3}=103$. Every column therefore has degree at least six. Since $109=17\cdot6+7$, the column-degree multiset consists of seventeen sixes and one seven.

Delete any degree-six column and call the remaining matrix $M'$. It has 103 ones and therefore lies in the unique extremal isomorphism class. Let $R$ be the representative included in the supplement. There exist a row permutation $\pi$ and a column permutation $\sigma$ carrying $M'$ to $R$. If the deleted column has row-neighborhood $S\subseteq[12]$, then under this isomorphism it is transported to the six-subset $\pi(S)$. Reattaching the deleted column to $M'$ preserves $K_{3,3}$-freeness if and only if attaching the column with neighborhood $\pi(S)$ to $R$ does so, because row and column permutations preserve the forbidden-submatrix property. Consequently, testing all $\binom{12}{6}=924$ six-subsets on the single representative $R$ covers every possible 109-edge counterexample. The verifier rejects every candidate extension, a contradiction.
\end{proof}

The supplied $12\times18$ witness has 108 ones and is \kfree{}. Lemma~\ref{lem:1218} therefore gives $\zar{12}{18}{3}=108$.

\section{Verification architecture}
The supplement contains eight plain-text matrices: the seven extremal witnesses and the 103-edge representative used in Lemma~\ref{lem:1218}. A portable verifier independently checks dimensions, edge counts, and both equivalent $K_{3,3}$ tests: every triple of rows shares at most two columns and every triple of columns shares at most two rows. It also performs the 924-extension rejection, the 43,758-row extension search, the two second-extension searches, and the exact deletion/extension relations. A machine-readable report records the counts and hashes.

The 43,758-row enumeration is not used as a global upper-bound proof for any Zarankiewicz number. It verifies only the structural statement about weight-eight extensions of the fixed supplied core. The global upper bounds for the six nonexceptional cells are the cited published bounds.

The finite verifier establishes the matrix and finite-search claims. The only external mathematical dependency in the exceptional upper-bound argument is the published uniqueness of the $(12,17,103)$ extremal graph in~\cite{Collins2016}; this dependency is stated explicitly rather than hidden inside the computation.

\section{Prior-art and integrity position}
Bhan et al.~\cite{Bhan2026} report lower bounds 108, 118, 124, and 132 at $(12,18)$, $(14,17)$, $(14,18)$, and $(15,18)$, respectively. Their displayed comparison bounds do not use the tighter entries of Collins et al., so those cells are not marked exact there. The present theorem combines the correct prior numerical ingredients with new witnesses or closures where needed and presents one auditable family.

The manuscript reproduces no third-party figure, table, matrix, or source-code block. Prior numerical bounds and definitions are paraphrased and cited. A targeted exact-phrase search found no match for the distinctive prose of this manuscript. This is not a substitute for an institutional similarity service, and a formal similarity report remains advisable before journal submission.

\section{Data availability}
The submission package contains the matrices, portable verifier, machine-readable verification report, and SHA-256 manifest needed to reproduce every finite claim. Discovery logs and unrelated future work are not required for verification and are excluded from the public package.

\section*{Author contributions}
Shengteng Hou: conceptualization, methodology, software, validation, formal analysis, investigation, data curation, writing, and project administration.

\section*{Competing interests}
The author declares no competing interests.

\end{document}